\title{Asymptotic dimension and hyperfiniteness of generic Cantor actions}
\date{\today}
\begin{document}

\author{Sumun Iyer$^\dag$}
\address{$^\dag$Carnegie Mellon University; \textnormal{sumuni@andrew.cmu.edu}}

\author{Forte Shinko$^*$}
\address{$^*$University of California Berkeley; \textnormal{forteshinko@berkeley.edu}}

\blfootnote{This research was partially conducted at the Spring 2023 thematic program in Set Theoretic Methods in Algebra, Dynamics and Geometry at the Fields Institute for Research in Mathematical Sciences. The first author was supported by NSF GRFP grant DGE – 2139899 and the second author was supported by the Fields Institute.}

\begin{abstract}
    We show that for a countable discrete group
    which is locally of finite asymptotic dimension,
    the generic continuous action on Cantor space
    has hyperfinite orbit equivalence relation.
    In particular,
    this holds for free groups,
    answering a question of Frisch-Kechris-Shinko-Vidny\'anszky.
\end{abstract}
\maketitle

For this entire article,
fix a countable discrete group $\Gamma$.

\section{Introduction}
A \textbf{countable Borel equivalence relation (CBER)}
is an equivalence relation $E$ on a standard Borel space $X$
which is Borel as a subset of $X^2$,
and for which every equivalence class is countable
(see \cite{Kec24} for more background on CBERs).

The theory of CBERs seeks to classify these equivalence relations
based on their relative complexity.
More precisely,
there is a natural preorder on CBERs,
called the \textbf{Borel reducibility} preorder,
defined as follows:
if $E$ and $F$ are CBERs on $X$ and $Y$ respectively,
then $E \le_B F$ if there is a Borel map $f : X \to Y$
such that for all $x, x' \in X$,
we have
\[
    x \mathrel E x'
    \iff f(x) \mathrel F f(x').
\]
If $E \le_B F$,
then we think of $E$ as ``simpler'' than $F$.

The simplest CBERs are the so-called \textbf{smooth} CBERs,
which are those CBERs $E$ satisfying $E \le_B \Delta_{\R}$,
where $\Delta_{\R}$ is the equality relation on $\R$.
The canonical non-smooth CBER is $E_0$ on $2^\N$ defined as follows:
\[
    x \mathrel E_0 y \iff \exists k \forall n > k \; [x_n = y_n]
\]
A CBER $E$ is \textbf{hyperfinite} if $E \le_B E_0$.
Hyperfiniteness is the next level up from smoothness in the following sense:
by the Harrington-Kechris-Louveau theorem,
a CBER $E$ is non-smooth iff $E_0 \le_B E$
(see \cite[Theorem 6.5]{Kec24}).
Hyperfiniteness is a very active area of research,
in part due to the deep connection with amenability.
Given a Borel action $\Gamma \car X$ on a standard Borel space $X$,
denote by $E_\Gamma^X$ the \textbf{orbit equivalence relation} of $X$.
By the Connes-Feldman-Weiss theorem \cite{CFW81},
every orbit equivalence relation
of every amenable group is measure-hyperfinite,
where a CBER $E$ on $X$ is \textbf{measure-hyperfinite}
if for every Borel probability measure $\mu$ on $X$,
there is a Borel subset $Y \subseteq X$ with $\mu(Y) = 1$
such that $E\uhr Y$ is hyperfinite.
A long-standing open question of Weiss
asks whether we can remove the measure condition
(see \cite[Problem 17.8]{Kec24}):
\begin{weiss}
    Is every orbit equivalence relation of every amenable group hyperfinite?
    \label{weiss}
\end{weiss}
This problem is far from being resolved,
and in fact it is still open for solvable groups,
although it is known that the answer is positive for nilpotent and polycyclic groups
(see \cite[Corollary 7.5]{CJMST23}).
To answer Weiss's Question in the positive,
it would be enough to have a positive resolution to the following question
(see \cite[Problem 17.7]{Kec24}):
\begin{qn}\label{meashyp-is-hyp}
    Is every measure-hyperfinite CBER hyperfinite?
\end{qn}
It is possible for this question to have a strong negative answer.
For instance,
measure-hyperfiniteness is a $\PI^1_1$ property,
and it is possible that hyperfiniteness is $\SIGMA^1_2$-complete
(see \cite[6.1(C)]{DJK94}),
which in particular would imply that there are ``many''
measure-hyperfinite CBERs which are not hyperfinite.

Another possible approach to a strong negative answer
is to apply Baire category in a Polish space of CBERs,
which we make precise.
A \textbf{Cantor action} of $\Gamma$ is a group homomorphism $\Gamma \to \Homeo(2^\N)$,
which we view as a continuous action $\Gamma \car 2^\N$.
Viewing $\Homeo(2^\N)$ as a Polish group with the compact-open topology,
let $\Act(\Gamma)$ be the Polish subspace of
$\Homeo(2^\N)^\Gamma$ consisting of Cantor actions of $\Gamma$.
It was shown by Suzuki (see \cite[Corollary 2.4]{Suz17})
that if $\Gamma$ is an \textbf{exact} group,
meaning that its reduced C*-algebra is exact,
then the set
$\{\mbf a \in \Act(\Gamma) : \text{$\mbf a$ is measure-hyperfinite}\}$
is comeager,
where we say that an action is \textbf{hyperfinite}
or \textbf{measure-hyperfinite}
if its orbit equivalence relation is.
This raises the following natural question,
which appears as Problem 8.0.16 in \cite{FKSV23}
(note that there it is stated in terms of the space of subshifts,
but this is is equivalent by a result of Hochman,
see \cite[Theorem 4.4.12]{FKSV23}):
\begin{qn}\label{generic-hyperfinite}
    If $\Gamma$ is an exact group,
    is the set
    $\{\mbf a \in \Act(\Gamma) : \text{$\mbf a$ is hyperfinite}\}$
    comeager?
\end{qn}

A negative answer to \Cref{generic-hyperfinite}
would immediately give a negative answer to \Cref{meashyp-is-hyp}.
However,
we show that \Cref{generic-hyperfinite} has a positive answer
for a wide class of exact groups,
in particular for free groups,
for which the problem had been open:
\begin{thm}\label{main-thm}
    If $\Gamma$ is locally of finite asymptotic dimension,
    then the set
    $\{\mbf a \in \Act(\Gamma) : \text{$\mbf a$ is hyperfinite}\}$
    is comeager.
\end{thm}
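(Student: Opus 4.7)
The plan is to reduce the statement to the case where $\Gamma$ is finitely generated with finite asymptotic dimension, and then to produce a comeager set of actions in that case whose orbit equivalence relation is visibly hyperfinite.

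For the reduction, since $\Gamma$ is locally of finite asymptotic dimension, write $\Gamma = \bigcup_n \Gamma_n$ as an increasing union of finitely generated subgroups, each of finite asymptotic dimension. The restriction maps $\rho_n : \Act(\Gamma) \to \Act(\Gamma_n)$, $\mathbf{a} \mapsto \mathbf{a}|_{\Gamma_n}$, are continuous. They should also be open with dense image, since basic opens in $\Act(\Gamma)$ are cut out by finitely many constraints on finitely many elements of $\Gamma$, and any Cantor action of $\Gamma_n$ extends to Cantor actions of $\Gamma$ (e.g.\ by freely specifying $\mathbf{a}(\gamma)$ on a set of coset representatives). Thus preimages of comeager sets under $\rho_n$ are comeager, so granting the theorem for each $\Gamma_n$, the set of $\mathbf{a} \in \Act(\Gamma)$ with $\mathbf{a}|_{\Gamma_n}$ hyperfinite for all $n$ is comeager. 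For such $\mathbf{a}$, the orbit equivalence relation $E^{2^\N}_\mathbf{a} = \bigcup_n E^{2^\N}_{\mathbf{a}|_{\Gamma_n}}$ is an increasing union of hyperfinite CBERs, hence hyperfinite by a classical result.

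For the finitely generated case, fix a finite symmetric generating set $S$ and suppose $\Gamma$ has asymptotic dimension $d$. For every scale $R \ge 1$, the Cayley graph of $\Gamma$ admits a uniformly bounded cover $\mathcal{U}_R$ with $R$-multiplicity at most $d+1$. I would aim to show that for a comeager set of $\mathbf{a}$, the orbit equivalence relation $E^{2^\N}_\mathbf{a}$ has finite Borel asymptotic dimension, which by \cite{CJMST23} implies hyperfiniteness. Concretely, one lifts $\mathcal{U}_R$ to a clopen cover of $2^\N$ using the action: choose a sufficiently fine clopen partition $\mathcal{P}$ of $2^\N$, and for each $U \in \mathcal{U}_R$ and $A \in \mathcal{P}$, form the clopen set $\bigcup_{\gamma \in U} \mathbf{a}(\gamma)(A)$. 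For ``sufficiently free'' $\mathbf{a}$ and fine enough $\mathcal{P}$, this is a clopen cover of $2^\N$ of bounded $\mathbf{a}$-diameter and multiplicity at most $d+1$. The freeness condition is open in $\mathbf{a}$, and denseness is achieved by approximating arbitrary actions by ``model'' ones factoring through finite quotients of $\Gamma$ admitting honest copies of $\mathcal{U}_R$.

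The main obstacle is that a generic Cantor action need not be free: the Schreier graph at a point may be a proper quotient of the Cayley graph, causing the lifted cover to overlap itself and violate the multiplicity bound. Arranging that this never happens, for every scale $R$ simultaneously while keeping the approximating partitions $\mathcal{P}$ compatible across scales, requires a delicate Baire-category argument interleaving fine clopen partitions with the combinatorial covers coming from finite asymptotic dimension; this is where I anticipate the bulk of the technical effort.
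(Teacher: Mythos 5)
There is a genuine gap, and it sits exactly at the point your reduction is supposed to do its work. After restricting to the finitely generated subgroups $\Gamma_n$, you write the orbit equivalence relation of $\mbf a$ as the increasing union $\bigcup_n E^{2^\N}_{\mbf a\uhr \Gamma_n}$ and conclude hyperfiniteness ``by a classical result.'' No such result is known: whether an increasing union of hyperfinite CBERs is hyperfinite is the well-known open \emph{union problem} (it is known for increasing unions of \emph{finite} equivalence relations, and measure-theoretically, but not in the Borel category). So even granting everything else, your argument only yields that generically each $E^{2^\N}_{\mbf a\uhr \Gamma_n}$ is hyperfinite, which does not give hyperfiniteness of the union. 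The paper avoids this precisely by never passing through hyperfiniteness of the pieces: it shows that the set of free $\mbf a$ with $\asdim_c(\mbf a\uhr\Delta)=\asdim(\Delta)$ is dense $G_\delta$ for \emph{each} finitely generated $\Delta\le\Gamma$, intersects over the countably many such $\Delta$, and then invokes \cite[Theorem 7.3]{CJMST23}, which says that an action of an increasing union of subgroups, each acting with finite (Borel/continuous) asymptotic dimension, has hyperfinite orbit equivalence relation. Finite asymptotic dimension is the uniform witness that survives the union; plain hyperfiniteness is not known to. (Your restriction-map reduction has a secondary soft spot as well: extending a given Cantor action of $\Gamma_n\le\Gamma$ to an action of $\Gamma$ on the same space is not a matter of ``freely specifying on coset representatives'' --- that works for free products, not subgroups --- so openness/dense image of $\rho_n$ would need an argument such as co-induction, whose restriction to $\Gamma_n$ is moreover not the original action. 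But this is moot given the union-problem issue.)

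On the finitely generated case, your instinct (generically, finite continuous/Borel asymptotic dimension, then \cite[Theorem 7.1]{CJMST23}) is the right one and matches the paper, but two of your concerns are misplaced. First, the ``main obstacle'' you anticipate --- that a generic action need not be free --- does not arise: freeness is dense $G_\delta$ in $\Act(\Gamma)$ (Suzuki; see also \Cref{free-generic}), so one may work inside the comeager set of free actions, where $S$-separated colorings of the action correspond exactly to the group-level data. Second, density of the relevant conditions does not require approximation by actions factoring through finite quotients. The paper packages ``$\mbf a$ admits a continuous $S$-separated $(d+1)$-coloring'' as the existence of a continuous coloring for an explicit LCL $\Pi_{S,d+1}$; having such a coloring is an open condition on $\mbf a$, and it is dense as soon as $\Gamma$ itself admits a $\Pi_{S,d+1}$-coloring (which is exactly $\asdim(\Gamma)\le d$), by building one subshift model and using generic ergodicity of the conjugation action $\Homeo(2^\N)\car\Act(\Gamma)$. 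This sidesteps the ``delicate interleaving across scales'' you anticipate: each scale $S\Subset\Delta$ gives one dense open set, and one simply intersects countably many of them.
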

Asymptotic dimension is a coarse invariant of discrete groups
taking values in $\N \cup \{\infty\}$.
Groups which are locally of finite asymptotic dimension
include free groups,
hyperbolic groups,
and mapping class groups of finite type surfaces
(see \cite[Part II]{BD08}
for background on asymptotic dimension of groups).
In particular,
this theorem exhibits examples of amenable groups,
such as $(\Z/2 \wr \Z)^2$,
for which it is now known that the generic Cantor action is hyperfinite,
but for which it is open whether all of its Cantor actions are hyperfinite.
There are still many amenable groups,
such as the solvable group $\Z \wr \Z$,
for which it not yet known that the generic Cantor action is hyperfinite.

\section{Background}
We denote by $\Homeo(2^\N)$ the homeomorphism group of the Cantor space $2^\N$,
viewed as a Polish group with the compact-open topology.

We describe an explicit basis for $\Homeo(2^\N)$.
We view every $\phi \in \Homeo(2^\N)$ as a directed graph
whose vertex set is $2^\N$,
and where there is a directed edge from $x$ to $y$ iff $\phi(x) = y$.
Ranging over all finite directed graphs $G$
and over all continuous maps $c : 2^\N \to V(G)$,
the sets
\[
    \{\phi \in \Homeo(2^\N)
    : \text{$c$ is a homomorphism of directed graphs from $\phi$ to $G$}\}
\]
form an open basis for the topology of $\Homeo(2^\N)$.

For the rest of this section,
fix a countable group $\Gamma$.

We write $S \Subset \Gamma$
to mean that $S$ is a finite subset of $\Gamma$
such that $1 \in S$ and $S^{-1} = S$.

Let $\Act(\Gamma)$ be the set of continuous actions of $\Gamma$ on $2^\N$.
We view $\Act(\Gamma)$ as the $\Homeo(2^\N)$-invariant Polish subspace of $\Homeo(2^\N)^\Gamma$
consisting of all group homomorphisms $\Gamma \to \Homeo(2^\N)$,
where the action $\Homeo(2^\N) \car \Homeo(2^\N)^\Gamma$ is by conjugation on each coordinate.

We describe an explicit basis for $\Act(\Gamma)$.
A \textbf{$\Gamma$-graph} is a pair $G = (V(G), E(G))$,
where $V(G)$ is a set,
and $E(G)$ is a subset of $\Gamma \times V(G) \times V(G)$.
We view every action $\Gamma \car X$ as an $\Gamma$-graph $G$
where $V(G) = X$
and $(\gamma, x, y) \in E(G)$ iff $\gamma \cdot x = y$.
For $\Gamma$-graphs $G$ and $G'$,
a function $f : V(G) \to V(G')$ is an \textbf{$\Gamma$-map} from $G$ to $G'$
if for every $(\gamma, v, w) \in E(G)$,
we have $(\gamma, f(v), f(w)) \in E(G')$.
A \textbf{finite $\Gamma$-graph} is a $\Gamma$-graph $G$
such that $V(G)$ is finite
and such that $E(G)$ is a cofinite subset of $\Gamma \times V(G) \times V(G)$.
Ranging over all finite $\Gamma$-graphs $G$
and over all continuous maps $c : 2^\N \to V(G)$,
the sets
\[
    \{\mbf a \in \Act(\Gamma) :
        \text{$c$ is a $\Gamma$-map from $\mbf a$ to $G$}\}
\]
form an open basis for the topology of $\Act(\Gamma)$.

\section{Locally checkable labelling problems}
For this section,
fix a countable group $\Gamma$.

We describe another basis for $\Act(\Gamma)$.

\begin{defn}
    An \textbf{LCL} on $\Gamma$
    (short for \textbf{Locally Checkable Labelling problem})
    is a set of functions
    each of whose domains is a finite subset of $\Gamma$.
\end{defn}
We think of an LCL as a set of ``allowed patterns'' for a coloring.
\begin{defn}
    Let $\Gamma \car X$ be an action,
    and let $\Pi$ be an LCL on $\Gamma$.
    A function $c$ with domain $X$ is a \textbf{$\Pi$-coloring}
    if there is some finite $\Pi_0 \subseteq \Pi$
    such that for all $x \in X$,
    there is some $P \in \Pi_0$
    such that for all $\gamma \in \dom(P)$,
    we have $c(\gamma x) = P(\gamma)$.
\end{defn}
So $c$ is a $\Pi$-coloring
iff there is some finite $\Pi_0 \subseteq \Pi$
for which $c$ is a $\Pi_0$-coloring.

\begin{prop}\label{prop_basis}
    Ranging over all LCLs $\Pi$
    and over all continuous maps $c$ from $2^\N$ to a discrete space,
    the sets
    \[
        \{\mbf a \in \Act(\Gamma) : \text{$c$ is a $\Pi$-coloring}\}
    \]
    form an open basis for the topology of $\Act(\Gamma)$.
\end{prop}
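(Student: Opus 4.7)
The plan is to compare with the $\Gamma$-graph basis from Section~2: I will show (i) every LCL-basic set is open in $\Act(\Gamma)$, and (ii) every $\Gamma$-graph basic set equals an LCL-basic set with the \emph{same} continuous map $c$. Together these imply that the LCL-basic sets form an open basis. For (ii), fix a finite $\Gamma$-graph $G$ and a continuous $c : 2^\N \to V(G)$, let $F = (\Gamma \times V(G)^2) \setminus E(G)$ (finite by hypothesis), and let $S = \{1\} \cup \{\gamma \in \Gamma : \exists v, w \; (\gamma, v, w) \in F\}$ (also finite). Define the finite LCL
\[
    \Pi = \{P : S \to V(G) \mid (\gamma, P(1), P(\gamma)) \notin F \text{ for all } \gamma \in S\}.
\]
Unwinding the definitions, $c$ is a $\Gamma$-map from an action $\mbf a$ to $G$ iff for every $x \in 2^\N$ the pattern $P_x : \gamma \mapsto c(\gamma \cdot_{\mbf a} x)$ on $S$ lies in $\Pi$, which is precisely the condition that $c$ be a $\Pi$-coloring under $\mbf a$ (with $\Pi_0 = \Pi$). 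Hence the two basic sets determined by $c$ coincide.

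For (i), suppose $\mbf a$ lies in the LCL-basic set for $(\Pi, c)$, witnessed by a finite $\Pi_0 \subseteq \Pi$, and put $S = \bigcup_{P \in \Pi_0} \dom(P)$. Since $2^\N$ is compact and $c$ has discrete range, $c$ has finite image and is determined by its clopen fibers $c^{-1}(d)$. For each $\gamma \in S$, the set
\[
    W_\gamma = \{\phi \in \Homeo(2^\N) : c \circ \phi = c \circ (\gamma \cdot_{\mbf a})\}
\]
is open: writing $V_d := (\gamma \cdot_{\mbf a})^{-1}(c^{-1}(d))$, one checks that $W_\gamma$ is the finite intersection over $d \in \operatorname{im}(c)$ of the sub-basic compact-open sets $\{\phi : \phi(V_d) \subseteq c^{-1}(d)\}$ (the two clopen partitions $\{V_d\}$ and $\{c^{-1}(d)\}$ force each inclusion to be an equality). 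Pulling these back along the continuous coordinate projections $\mbf b \mapsto \gamma \cdot_{\mbf b}$ and intersecting over $\gamma \in S$ produces an open neighborhood of $\mbf a$ in $\Act(\Gamma)$ on which, for every $x \in 2^\N$ and every $\gamma \in S$, $c(\gamma \cdot_{\mbf b} x) = c(\gamma \cdot_{\mbf a} x)$; thus the very same $\Pi_0$ continues to witness that $c$ is a $\Pi$-coloring.

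The only real subtlety is in (i), where one must combine the compactness of $2^\N$ with the finiteness of $\operatorname{im}(c)$ to express the condition ``perturbing $\mbf a$ leaves $c(\gamma \cdot x)$ unchanged for every $x$'' using only finitely many compact-open constraints on $\Homeo(2^\N)$. The rest of the argument is a direct repackaging of definitions, and crucially $c$ need not be altered when translating between the two bases.
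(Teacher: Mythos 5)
Your proof is correct. Your step (ii) is essentially identical to the paper's: there too one fixes $S \Subset \Gamma$ with $(\Gamma \setminus S) \times V(G) \times V(G) \subseteq E(G)$ and takes $\Pi$ to consist of the patterns $P : S \to V(G)$ with $(s, P(1), P(s)) \in E(G)$, keeping the same map $c$. Where you genuinely diverge is the openness step (i). The paper stays inside the $\Gamma$-graph basis: after fixing a total order on $\Pi$ it defines the continuous selector $f(x) = {}$the first $P \in \Pi$ whose pattern matches $c$ along the $\mbf a$-orbit of $x$, makes $\Pi$ itself into a $\Gamma$-graph (with $(\gamma, P, Q)$ an edge iff $(\gamma P) \cup Q$ is a function), and observes that $\{\mbf b : f \text{ is a $\Gamma$-map to } \Pi\}$ is a graph-basic neighborhood of $\mbf a$ contained in the LCL set. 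You instead unwind the compact-open topology on $\Homeo(2^\N)$ directly: fixing the finite witness $\Pi_0$ and $S = \bigcup_{P \in \Pi_0} \dom(P)$, your neighborhood forces $c \circ (\gamma \cdot_{\mbf b}) = c \circ (\gamma \cdot_{\mbf a})$ for the finitely many $\gamma \in S$ via finitely many constraints $\{\phi : \phi(V_d) \subseteq c^{-1}(d)\}$, which is legitimate because $c$ has finite image and clopen (hence compact) fibers, and then the same $\Pi_0$ witnesses the coloring for every $\mbf b$ in the neighborhood. Both arguments are sound: the paper's has the merit of exhibiting the LCL set explicitly as a union of sets from the already-established graph basis, while yours is more hands-on, avoids putting any $\Gamma$-graph structure on $\Pi$ (and hence any bookkeeping about translated patterns or about which pattern is selected), and makes transparent that the only finiteness needed is that of $\Pi_0$, of $S$, and of $\operatorname{im}(c)$.
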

\begin{proof}
    First we show that each such set is open.
    Let $\mbf a \in \Act(\Gamma)$,
    let $\Pi$ be an LCL,
    and let $c$ be a continuous map from $2^\N$ to a discrete space,
    such that $c$ is a $\Pi$-coloring of $\mbf a$.
    Fix a total order on $\Pi$,
    and let $f : 2^\N \to \Pi$ be the function defined as follows:
    for $x \in 2^\N$,
    let $f(x)$ be the first element $P \in \Pi$
    such that for all $\gamma \in \dom(P)$,
    we have $c(\gamma x) = P(\gamma)$.
    This is continuous since $c$ is continuous.
    Endow $\Pi$ with a $\Gamma$-graph structure as follows:
    say that $(\gamma, P, Q)$ is an edge
    if $(\gamma P) \cup Q$ is a function.
    Then $f$ is a $\Gamma$-map from $\mbf a$ to $\Pi$,
    and for every $\mbf b \in \Act(\Gamma)$ for which $f$ is a $\Gamma$-map,
    we have that $c$ is a $\Pi$-coloring of $\mbf b$.

    To show that it is a basis,
    we will show that every set in the previous basis is of the new form.
    Fix a finite $\Gamma$-graph $G$ and a continuous map $f : 2^\N \to V(G)$.
    Fix $S \Subset \Gamma$ such that
    $(\Gamma \setminus S) \times V(G) \times V(G) \subseteq E(G)$.
    Consider the LCL $\Pi$ consisting of all functions $P : S \to V(G)$
    such that $(s, P(1), P(s)) \in E(G)$,
    Then for every $\mbf a \in \Act(\Gamma)$,
    we have that $f$ is a $\Gamma$-map from $\mbf a$ to $V(G)$
    iff $f$ is a $\Pi$-coloring.
\end{proof}

Notice that every set of the form 
\[
    \{\mbf a \in \Act(\Gamma) :
    \text{$\mbf a$ has a continuous $\Pi$-coloring}\}
\]
is an open set
since it is a union of the basic open sets considered in \Cref{prop_basis}. 

We show that nonempty such sets are dense.
\begin{prop}\label{lcl-generic}
    Let $\Pi$ be an LCL on $\Gamma$.
    Then the following are equivalent:
    \begin{enumerate}
        \item $\Gamma$ has a $\Pi$-coloring.
        \item $\{\mbf a \in \Act(\Gamma) :
        \text{$\mbf a$ has a continuous $\Pi$-coloring}\}$ is nonempty.
        \item $\{\mbf a \in \Act(\Gamma) :
        \text{$\mbf a$ has a continuous $\Pi$-coloring}\}$ is dense.
    \end{enumerate}
\end{prop}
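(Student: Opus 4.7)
My plan is to prove the cycle $(3) \Rightarrow (2) \Rightarrow (1) \Rightarrow (3)$. The first implication is automatic once one notes that $\Act(\Gamma)$ is nonempty (it contains the trivial action), so any dense subset is nonempty. For $(2) \Rightarrow (1)$, suppose $c$ is a continuous $\Pi$-coloring of some $\mbf a \in \Act(\Gamma)$, witnessed by a finite $\Pi_0 \subseteq \Pi$. I pick any $x_0 \in 2^\N$ and set $c_0(\gamma) := c(\gamma \cdot x_0)$; applying the $\Pi_0$-coloring property of $c$ at the point $h \cdot x_0$ produces some $P \in \Pi_0$ with $c_0(\gamma h) = c((\gamma h) \cdot x_0) = P(\gamma)$ for all $\gamma \in \dom(P)$, which shows that $c_0$ is a $\Pi$-coloring of $\Gamma$ acting on itself by left multiplication.

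The substantive implication is $(1) \Rightarrow (3)$. I first pick a finite $\Pi_0 \subseteq \Pi$ such that $c_0$ is a $\Pi_0$-coloring; since any $\Pi_0$-coloring is a $\Pi$-coloring, it suffices to prove (3) with $\Pi$ replaced by $\Pi_0$, so I may assume $\Pi$ is finite. Let $A$ be a finite set containing the ranges of all $P \in \Pi$, equip $A^\Gamma$ with the right-shift $\Gamma$-action $(\gamma \cdot \xi)(g) := \xi(g\gamma)$, and let $\Xi \subseteq A^\Gamma$ be the subshift of all $\Pi$-colorings of $\Gamma$. Finiteness of $\Pi$ makes $\Xi$ closed and shift-invariant, and it is nonempty since $c_0 \in \Xi$; moreover, evaluation at $1 \in \Gamma$ is a continuous $\Pi$-coloring of the shift action $\sigma$ on $\Xi$. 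Given any nonempty basic open set $U = \{\mbf a \in \Act(\Gamma) : c \text{ is a $\Gamma$-map from $\mbf a$ to } G\}$ from the basis of Section~2, with some witness $\mbf a_0 \in U$, the product action $\mbf a_0 \times \sigma$ on $2^\N \times \Xi$ has a continuous $\Pi$-coloring pulled back from $\Xi$, and the composition $c \circ \pi_1$ is a $\Gamma$-map from the product action to $G$.

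To transfer this product into an element of $\Act(\Gamma) \cap U$, I look for a homeomorphism $\phi : 2^\N \to 2^\N \times \Xi$ satisfying $c \circ \pi_1 \circ \phi = c$; the conjugate $\mbf a := \phi^{-1} \circ (\mbf a_0 \times \sigma) \circ \phi$ will then lie in $U$ (the $\Gamma$-map property of $c \circ \pi_1$ transfers to $c$) and inherit a continuous $\Pi$-coloring. To build $\phi$ it suffices to produce fiberwise homeomorphisms $\phi_v : c^{-1}(v) \to c^{-1}(v) \times \Xi$ for each $v \in V(G)$ and glue them. The technical core is that each $c^{-1}(v)$ is clopen in $2^\N$ (since $c$ is continuous into a discrete space), hence either empty or a Cantor space: it is compact, metrizable, and zero-dimensional, and cannot have an isolated point because such a point would be isolated in the ambient $2^\N$. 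The product $c^{-1}(v) \times \Xi$ is then also empty or a Cantor space, since $\Xi$ is nonempty, compact, metrizable, and zero-dimensional, so the required fiberwise homeomorphism exists by Brouwer's characterization of $2^\N$. I expect this Cantor-fiber reduction to be the main obstacle; the remaining verifications, that conjugation by $\phi$ preserves the $\Gamma$-map property and pulls back the continuous $\Pi$-coloring, are routine.
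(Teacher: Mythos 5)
Your proof is correct, but it takes a genuinely different route from the paper's on the two substantive implications. The shared core is the subshift idea: like the paper, you pass to a finite $\Pi_0$ and realize the space of $\Pi_0$-colorings of $\Gamma$ as a subshift $\Xi \subseteq A^\Gamma$ on which evaluation at $1$ is a continuous coloring. The paper uses this only for $(1)\Rightarrow(2)$ (multiplying by $2^\N$ to land in $\Act(\Gamma)$), then gets $(2)\Rightarrow(3)$ by citing generic ergodicity of the conjugation action $\Homeo(2^\N) \car \Act(\Gamma)$ from \cite{FKSV23} together with openness and invariance of the set in question, and proves $(3)\Rightarrow(1)$ by intersecting with the dense $G_\delta$ set of free actions (\Cref{free-generic}) and pulling back along an equivariant map $\Gamma \to \mbf a$. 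You instead prove $(1)\Rightarrow(3)$ directly: given a nonempty basic open set determined by a finite $\Gamma$-graph $G$ and a continuous $c : 2^\N \to V(G)$ with witness $\mbf a_0$, you conjugate $\mbf a_0 \times \sigma$ back onto $2^\N$ by a homeomorphism compatible with $c$, built fiberwise via Brouwer's theorem on the clopen (hence empty-or-Cantor) sets $c^{-1}(v)$; this in effect reproves by hand exactly the instance of the dense-orbit fact that the paper imports, so your argument is self-contained at the cost of the clopen-fiber bookkeeping. Your $(2)\Rightarrow(1)$ is also leaner: the orbit map $\gamma \mapsto \gamma \cdot x_0$ is equivariant for any action, so you need no freeness and no appeal to \Cref{free-generic}, whereas the paper's $(3)\Rightarrow(1)$ also tacitly uses openness of the set to intersect it with the comeager set of free actions. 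One small wrinkle, shared with the paper's proof: nonemptiness of $\Xi$ (the paper's $X \subseteq K^\Gamma$) needs a $\Pi_0$-coloring of $\Gamma$ with values in $A$, and the given coloring may take other values at points not constrained by any witnessing pattern; redefining it there to take values in $A$ fixes this, so it is harmless in both arguments.
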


We will need the following.
\begin{prop}\label{free-generic}
    The set $\{\mbf a \in \Act(\Gamma) : \text{$\mbf a$ is free}\}$
    is dense $G_\delta$.
\end{prop}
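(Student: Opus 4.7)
The plan is to express freeness as a countable intersection of LCL-coloring conditions and invoke \Cref{lcl-generic} together with Baire category.

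For each $\gamma \in \Gamma \setminus \{1\}$, fix a finite color set $C_\gamma$ (at most three colors will suffice) and let $\Pi_\gamma$ be the LCL consisting of all functions $P \colon \{1,\gamma\} \to C_\gamma$ with $P(1) \neq P(\gamma)$. Unwinding definitions, a continuous $\Pi_\gamma$-coloring of an action $\mathbf a$ is a continuous $c \colon 2^\N \to C_\gamma$ with $c(x) \neq c(\gamma x)$ for every $x$. Let $U_\gamma = \{\mathbf a \in \Act(\Gamma) : \mathbf a \text{ has a continuous } \Pi_\gamma\text{-coloring}\}$; the remark following \Cref{prop_basis} gives that each $U_\gamma$ is open.

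First I would check that $\Gamma$ itself admits a $\Pi_\gamma$-coloring, so that \Cref{lcl-generic} yields density of $U_\gamma$. This reduces to a finite combinatorial exercise: the left $\langle \gamma \rangle$-orbits on $\Gamma$ are copies of $\Z$ or $\Z/n$, and we want a coloring making consecutive vertices (under the $\gamma$-action) differ. Two colors suffice except when $\gamma$ has odd finite order, where three colors suffice. Hence each $U_\gamma$ is dense open.

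Next I would identify $\bigcap_{\gamma \neq 1} U_\gamma$ with the set of free actions. The forward direction is immediate: a continuous $\Pi_\gamma$-coloring rules out $\gamma$-fixed points. For the reverse, suppose $\mathbf a$ is free and fix $\gamma \neq 1$. Since $x \neq \gamma x$ for all $x$ and $\mathbf a$ acts by homeomorphisms, each $x$ has a clopen neighborhood $W_x$ with $\gamma W_x \cap W_x = \emptyset$; by compactness of $2^\N$, finitely many $W_1, \dots, W_k$ cover, and the clopen partition $V_i := W_i \setminus \bigcup_{j<i} W_j$ still satisfies $\gamma V_i \cap V_i = \emptyset$. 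The coloring $c(x) = i$ if $x \in V_i$ is then a continuous $\Pi_\gamma$-coloring (possibly after enlarging $C_\gamma$, which does not affect density of $U_\gamma$).

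Finally I would conclude: the free actions form $\bigcap_{\gamma \neq 1} U_\gamma$, a countable intersection of dense open subsets of the Polish space $\Act(\Gamma)$, hence dense $G_\delta$ by the Baire category theorem. No step is a serious obstacle here; the genuine content is packaged into \Cref{lcl-generic}, and the only non-formal ingredient is the elementary compactness argument converting freeness into the LCL coloring condition.
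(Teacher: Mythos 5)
Your argument is correct, and it follows essentially the route the paper itself sketches in \Cref{free-rmk} (the paper's ``official'' proof is otherwise just a citation to \cite[Lemma 2.1]{Suz17}): express freeness as the conjunction, over $\gamma \neq 1$, of the existence of a continuous coloring for the two-point LCL forbidding $c(x) = c(\gamma x)$, get openness from the remark after \Cref{prop_basis}, get density from \Cref{lcl-generic} via a proper coloring of $\Gamma$ itself, and intersect. The one genuine difference is the ingredient converting freeness into the coloring condition: the paper's remark invokes a continuous coloring result (\cite[Lemma 2.3]{Ber23}) to obtain exactly three colors, whereas you use an elementary compactness/disjointification argument, which is more self-contained but yields finitely many colors with no uniform bound over actions. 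That is harmless, but your parenthetical ``possibly after enlarging $C_\gamma$'' should be made precise: the color set must be fixed in advance, not per action, so either take $C_\gamma = \N$ from the start (an LCL may have infinitely many values, and continuity of $c$ into a discrete space on the compact space $2^\N$ forces finite range, so the finite-$\Pi_0$ clause still holds; enlarging $C_\gamma$ only enlarges $U_\gamma$, preserving openness and density, and the implication ``$\Pi_\gamma$-coloring $\Rightarrow$ no $\gamma$-fixed points'' is unaffected), or else keep three colors and supply the greedy recoloring argument for graphs of degree at most $2$, which is exactly what the cited lemma provides. Finally, you should say explicitly that you only use the implications (1 $\implies$ 2 $\implies$ 3) of \Cref{lcl-generic}: the (3 $\implies$ 1) direction is proved using \Cref{free-generic}, so the non-circularity of your appeal rests on the fact (noted in \Cref{free-rmk}) that the directions you use never invoke \Cref{free-generic}; since your argument indeed uses only those directions, this is a matter of presentation rather than a gap.
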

For a proof of \Cref{free-generic},
see \cite[Lemma 2.1]{Suz17} or \Cref{free-rmk}.
\begin{proof}[Proof of \Cref{lcl-generic}]
    \leavevmode
    \begin{itemize}
    \item (1 $\implies$ 2):
    
        It suffices to find some zero-dimensional compact $\Gamma$-space
        with a continuous $\Pi$-coloring,
        since its product with $2^\N$ yields a Cantor action with the same property.
        
        Fix a finite subset $\Pi_0 \subseteq \Pi$ such that $\Gamma$ has a $\Pi_0$-coloring,
        and let $K = \bigcup_{P \in \Pi_0} \im(P)$.
        View $K^\Gamma$ as a compact $\Gamma$-space
        equipped with the action $(\gamma \cdot x)_{\delta} = x_{\delta\gamma}$.
        Then the compact $\Gamma$-invariant subspace of $K^\Gamma$
        defined by
        \[
            X = \{x \in K^\Gamma : \text{$x$ is a $\Pi_0$-coloring}\}
        \]
        is nonempty,
        and it has a continuous $\Pi_0$-coloring given by $c(x) = x_1$.

    \item (2 $\implies$ 3):
    
         The action $\Homeo(2^\N) \car \Act(\Gamma)$ is generically ergodic,
         i.e. has a dense orbit
         (see \cite[Proposition 4.4.2]{FKSV23}),
         so since this set is non-empty and $\Homeo(2^\N)$-invariant,
         it is dense.
         
    \item (3 $\implies$ 1):
    
        By \Cref{free-generic},
        there is a free $\mbf a \in \Act(\Gamma)$
        with a continuous $\Pi$-coloring.
        By freeness,
        there is a $\Gamma$-equivariant map $\Gamma \to \mbf a$,
        and the composition of this with the $\Pi$-coloring of $\mbf a$
        is a $\Pi$-coloring of $\Gamma$.
    \end{itemize}
\end{proof}

\begin{rmk}\label{free-rmk}
    We can also prove \Cref{free-generic} using LCLs.
    One can show using a coloring result like \cite[Lemma 2.3]{Ber23}
    that a zero-dimensional Polish $\Gamma$-space is free
    iff for every $\gamma \in \Gamma$,
    it has a continuous coloring for the LCL
    consisting of injections $\{1, \gamma\} \hra \{0, 1, 2\}$.
    Then \Cref{free-generic} immediately follows from
    (1 $\implies$ 2 $\implies$ 3)
    of \Cref{lcl-generic},
    whose proof never used \Cref{free-generic}.
\end{rmk}

\section{Asymptotic dimension and hyperfiniteness}
For this section,
fix a countable group $\Gamma$.

\begin{defn}
    Let $n \in \N$.
    An \textbf{$n$-coloring} is a function
    whose image is a subset of
    $\{0, 1, 2, \ldots, n - 1\}$.
\end{defn}
\begin{defn}
    Let $\Gamma \car X$ be an action and let $S \Subset \Gamma$.
    A function $c$ with domain $X$ is \textbf{$S$-separated}
    if there is a uniform bound on the sizes of the components
    of the graph with vertex set $X$
    where $x$ and $x'$ are adjacent iff $x' \in Sx$ and $c(x) = c(x')$.
\end{defn}

\begin{defn}
    The \textbf{asymptotic dimension} of an action
    $\Gamma \car X$ of a group on a set,
    denoted $\asdim(\Gamma \car X)$,
    is defined as follows:
    \[
        \asdim(\Gamma \car X)
        = \sup_{S \Subset \Gamma}
        \min\{n \in \N :
        \text{$\Gamma \car X$ has an $S$-separated $n$-coloring}\} - 1
    \]
\end{defn}

We define the asymptotic dimension of a group.
\begin{defn}
    The \textbf{asymptotic dimension} of a group $\Gamma$,
    denoted $\asdim(\Gamma)$,
    is the asymptotic dimension of the left-multiplication action
    $\Gamma \car \Gamma$.
\end{defn}

\begin{defn}
    A group $\Gamma$ is \textbf{locally of finite asymptotic dimension}
    if all of its finitely generated subgroups have finite asymptotic dimension.
\end{defn}

Note that every free action of $\Gamma$ has asymptotic dimension $\asdim(\Gamma)$.
In particular,
if $\Delta$ is a subgroup of $\Gamma$,
then $\asdim(\Delta \car \Gamma) = \asdim(\Delta)$.

Asymptotic dimension can be encoded by LCLs.
\begin{defn}
    Let $S \Subset \Gamma$ and let $n \in \N$.
    The LCL $\Pi_{S, n}$ is the set of $n$-colorings $P$ with $\dom(P)$ such that
    \begin{enumerate}[label=(\roman*)]
        \item $\dom(P)$ is a finite subset of $\Gamma$;
        \item $1 \in \dom(P)$;
        \item for every $\gamma \in \dom(P)$ with $P(\gamma) = P(1)$,
            we have $S \gamma \subseteq \dom(P)$.
    \end{enumerate}
\end{defn}

\begin{prop}\label{asdim-lcl}
    Let $\Gamma \car X$ be an action,
    let $S \Subset \Gamma$,
    and let $n \in \N$.
    Then every $\Pi_{S, n}$-coloring of $X$ is an $S$-separated $n$-coloring.
    Moreover,
    if the action is free,
    then the converse also holds.
\end{prop}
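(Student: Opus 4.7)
The plan is to handle the two directions separately. For the forward direction, suppose $c$ is a $\Pi_{S,n}$-coloring witnessed by a finite $\Pi_0 \subseteq \Pi_{S,n}$. Each $P \in \Pi_0$ is an $n$-coloring, so $c$ is automatically an $n$-coloring, and the content lies in establishing $S$-separation. For each $x \in X$ I would pick $P_x \in \Pi_0$ with $c(\gamma x) = P_x(\gamma)$ for all $\gamma \in \dom(P_x)$, and prove by induction along an $S$-path that the monochromatic $S$-component of $x$ is contained in $\dom(P_x) \cdot x$: the base case is $1 \in \dom(P_x)$, and the inductive step uses clause (iii), since on a monochromatic path the equation $c(\gamma x) = c(x) = P_x(1)$ forces $P_x(\gamma) = P_x(1)$ and hence $S\gamma \subseteq \dom(P_x)$. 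The uniform bound is then $\max_{P \in \Pi_0} |\dom(P)|$.

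For the converse, assume the action is free and fix an $S$-separated $n$-coloring $c$ with uniform component bound $M$. For each $x \in X$, let $C_x$ be its monochromatic $S$-component, and use freeness to obtain the unique $A_x \subseteq \Gamma$ with $A_x \cdot x = C_x$. Set $B_x = A_x \cup SA_x$ and define $P_x : B_x \to \{0, \ldots, n-1\}$ by $P_x(\gamma) = c(\gamma x)$. Clauses (i) and (ii) of $\Pi_{S,n}$ are immediate from $1 \in A_x \subseteq B_x$. For clause (iii), the key point is that if $\gamma \in B_x \setminus A_x$, say $\gamma = s\alpha$ with $s \in S$ and $\alpha \in A_x$, then there is an $S$-edge from $\gamma x$ to $\alpha x \in C_x$, which would pull $\gamma x$ into $C_x$ if $c(\gamma x) = c(x)$, contradicting $\gamma \notin A_x$. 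Hence any $\gamma \in B_x$ with $P_x(\gamma) = P_x(1)$ already lies in $A_x$, and $S\gamma \subseteq SA_x \subseteq B_x$.

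The main obstacle, and the reason freeness plays a second substantive role, is ensuring that the family $\{P_x : x \in X\}$ lies in a single finite $\Pi_0$. The plan is to observe that $C_x$ is $S$-connected in $X$ with $|C_x| \leq M$, and freeness transfers this to the statement that $A_x$ is an $S$-connected subset of the Cayley graph of $\Gamma$ containing $1$ and of size at most $M$; thus $A_x \subseteq S^{M-1}$ and $B_x \subseteq S^M$. Since $S^M$ is a fixed finite set, only finitely many pairs $(\dom(P_x), P_x)$ can occur, so $\Pi_0 := \{P_x : x \in X\}$ is a finite subset of $\Pi_{S,n}$ witnessing that $c$ is a $\Pi_{S,n}$-coloring.
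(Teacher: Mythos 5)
Your proof is correct and follows essentially the same route as the paper's: in the forward direction you bound each monochromatic $S$-component by $\dom(P)\cdot x$ via clause (iii), and in the converse your pattern $P_x$ on $B_x = S A_x$ is exactly (via freeness) the paper's pattern on $\{\gamma : \gamma x \in S[x]_G\}$, with the same use of $S$-connectedness to confine the domains to a fixed power of $S$ and thereby get a finite $\Pi_0$. The only cosmetic difference is that the paper takes $\Pi_0$ to be all patterns with domain in $S^{k+1}$ rather than the family $\{P_x\}$ itself.
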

\begin{proof}
    Fix a function $c$ with domain $X$.
    
    Let $G$ be the graph with vertex set $X$
    where $x$ and $x'$ are adjacent iff $x' \in Sx$ and $c(x) = c(x')$.
    
    Suppose $c$ is a $\Pi_{S, n}$-coloring.
    Then $c$ is a $\Pi_0$-coloring for some finite $\Pi_0 \subseteq \Pi_{S, n}$.
    Let $x \in X$.
    Since $c$ is a $\Pi_0$-coloring,
    there is some $P \in \Pi_0$ such that for every $\gamma \in \dom(P)$,
    we have $P(\gamma) = c(\gamma x)$.
    Then $[x]_G \subseteq \dom(P) x$.
    Hence every component of $G$ has size at most $\max_{P \in \Pi_0} |\dom(P)|$.

    Now suppose that the action is free,
    and suppose that $c$ is an $S$-separated $n$-coloring.
    Then there is some $k \in \N$
    such that for every $x \in X$,
    the $G$-component $[x]_G$ of $x$ satisfies $[x]_G \subseteq S^k x$.
    Let $\Pi_0 \subseteq \Pi_{S, n}$ consist of those $P$
    with $\dom(P) \subseteq S^{k+1}$.
    Now suppose $x \in X$.
    Consider the function $P$
    with domain $\{\gamma \in \Gamma : \gamma x \in S[x]_G\}$
    defined by $P(\gamma) = c(\gamma x)$.
    Then $P \in \Pi_{S, n}$,
    and we have $\dom(P)x \subseteq S[x]_G \subseteq S^{k+1} x$,
    so by freeness we have $\dom(P) \subseteq S^{k+1}$,
    and hence $P \in \Pi_0$.
    Thus $c$ is a $\Pi_{S, n}$-coloring.
\end{proof}

For Cantor actions,
we use a topological version of asymptotic dimension.
\begin{defn}
    The \textbf{continuous asymptotic dimension} of a continuous action
    $\Gamma \car X$ on a topological space,
    denoted $\asdim_c(\Gamma \car X)$,
    is defined as follows:
    \[
        \asdim_c(\Gamma \car X)
        = \sup_{S \Subset \Gamma}
        \min\{n \in \N :
        \text{$\Gamma \car X$ has a continuous $S$-separated $n$-coloring}\} - 1
    \]
\end{defn}

\begin{thm}\label{subgroup-asdim}
    Let $\Delta \le \Gamma$ be a subgroup.
    Then the set
    \[
        \{\mbf a \in \Act(\Gamma) :
        \text{$\mbf a$ is free and
            $\asdim_c(\mbf a\uhr \Delta) = \asdim(\Delta)$}\}
    \]
    is dense $G_\delta$,
    and hence comeager.
\end{thm}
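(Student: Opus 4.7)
The plan is to split by whether $\asdim(\Delta)$ is finite. If $\asdim(\Delta) = \infty$, then the conclusion is immediate from \Cref{free-generic}: every free $\mbf a \in \Act(\Gamma)$ restricts to a free $\Delta$-action, so $\asdim(\mbf a \uhr \Delta) = \asdim(\Delta) = \infty$ by the remark preceding the statement, and since continuous $S$-separated $n$-colorings are in particular $S$-separated $n$-colorings, $\asdim_c(\mbf a \uhr \Delta) \ge \asdim(\mbf a \uhr \Delta) = \infty$.

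Now suppose $\asdim(\Delta) = n < \infty$. By \Cref{free-generic} the free actions form a dense $G_\delta$ set on which the lower bound $\asdim_c(\mbf a \uhr \Delta) \ge n$ is automatic, so it suffices to show that for each $S \Subset \Delta$, the set
\[
    U_S := \{\mbf a \in \Act(\Gamma) : \mbf a \uhr \Delta \text{ has a continuous } \Pi_{S, n+1}\text{-coloring}\}
\]
is dense open in $\Act(\Gamma)$: intersecting the countably many $U_S$ with the free actions then produces a dense $G_\delta$ set on which $\asdim_c(\mbf a \uhr \Delta) \le n$ by \Cref{asdim-lcl}. Viewing an LCL $\Pi$ on $\Delta$ as an LCL on $\Gamma$, a $\Pi$-coloring of $\mbf a$ is the same as a $\Pi$-coloring of $\mbf a \uhr \Delta$, so openness of $U_S$ follows from \Cref{prop_basis}. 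Since $U_S$ is $\Homeo(2^\N)$-invariant and $\Homeo(2^\N) \car \Act(\Gamma)$ has a dense orbit (generic ergodicity), density reduces to nonemptiness, exactly as in the proof of $(2 \Rightarrow 3)$ of \Cref{lcl-generic}.

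For nonemptiness I would imitate the construction in $(1 \Rightarrow 2)$ of \Cref{lcl-generic}. Since $\asdim(\Delta) \le n$, $\Delta$ has a $\Pi_{S, n+1}$-coloring $c_\Delta$, realized by some finite $\Pi_0 \subseteq \Pi_{S, n+1}$. Fix a transversal $T$ for the right cosets of $\Delta$ in $\Gamma$ and extend $c_\Delta$ to $c_0 : \Gamma \to K$ (with $K = \bigcup_{P \in \Pi_0} \im(P)$) by $c_0(\delta t) = c_\Delta(\delta)$ for $\delta \in \Delta$ and $t \in T$, and let
\[
    X = \{x \in K^\Gamma : \forall \gamma \in \Gamma, \exists P \in \Pi_0, \forall \delta \in \dom(P), x_{\delta \gamma} = P(\delta)\},
\]
a closed $\Gamma$-invariant subshift of the shift $\Gamma \car K^\Gamma$ containing $c_0$; the evaluation $x \mapsto x_1$ is then a continuous $\Pi_0$-coloring of $X \uhr \Delta$, and $X \times 2^\N$ is a Cantor $\Gamma$-action in $U_S$. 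The main technical step is to verify $c_0 \in X$, but writing $\gamma = \delta_0 t$ reduces the condition at $\gamma$ to the $\Pi_0$-coloring condition for $c_\Delta$ at $\delta_0 \in \Delta$, which holds by choice of $c_\Delta$.
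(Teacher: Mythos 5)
Your proposal is correct and takes essentially the same route as the paper: where the paper simply cites \Cref{lcl-generic} (openness via \Cref{prop_basis}, density via generic ergodicity plus nonemptiness) together with its earlier remark that $\asdim(\Delta \car \Gamma) = \asdim(\Delta)$, you inline those arguments, and your transversal construction of $c_0$ is exactly the verification that $\Gamma$ (i.e.\ the left action $\Gamma \car \Gamma$, restricted along $\Delta$) admits a $\Pi_{S,n+1}$-coloring, which is condition (1) of \Cref{lcl-generic}. One small point to tighten: you produce a dense $G_\delta$ set contained in the stated set, which yields comeagerness but not literally the ``dense $G_\delta$'' claim; for that, note the stated set actually \emph{equals} the intersection of the free actions with the sets $U_S$, since by the ``moreover'' clause of \Cref{asdim-lcl} (using freeness) any continuous $S$-separated $(n+1)$-coloring of $\mbf a \uhr \Delta$ is already a continuous $\Pi_{S,n+1}$-coloring.
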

\begin{proof}
    For free $\mbf a \in \Act(\Gamma)$,
    we have $\asdim_c(\mbf a \uhr \Delta)
    \ge \asdim(\mbf a \uhr \Delta)
    = \asdim(\Delta)$,
    so we need only consider the inequality
    $\asdim_c(\mbf a\uhr \Delta) \le \asdim(\Delta)$.
    
    Freeness is dense $G_\delta$ by \Cref{free-generic}.
    If $\asdim(\Delta) = \infty$,
    then the set in question is just the set of free actions,
    so we are done.

    So suppose $\asdim(\Delta) < \infty$.
    By \Cref{asdim-lcl},
    for $\mbf a \in \Act(\Gamma)$,
    we have that $\mbf a$ is free and satisfies
    $\asdim_c(\mbf a\uhr \Delta) \le \asdim(\Delta)$
    iff $\mbf a$ is free and has a continuous $\Pi_{S, \asdim(\Delta) + 1}$-coloring
    for every $S \Subset \Delta$,
    The latter condition is dense $G_\delta$ by \Cref{lcl-generic},
    so we are done.
\end{proof}
In particular,
\[
    \{\mbf a \in \Act(\Gamma) :
    \text{$\mbf a$ is free and $\asdim_c(\mbf a) = \asdim(\Gamma)$}\}
\]
is dense $G_\delta$ and hence comeager,
so if $\asdim(\Gamma)$ is finite,
then the generic element of $\Act(\Gamma)$ is hyperfinite
by \cite[Theorem 7.1]{CJMST23}.
We can sharpen this to obtain \Cref{main-thm} from the introduction:
\begin{thm}
    The set
    \[
        \{\mbf a \in \Act(\Gamma) :
        \text{$\mbf a$ is free and $\asdim_c(\mbf a\uhr \Delta) = \asdim(\Delta)$
            for every finitely generated $\Delta \le \Gamma$}\}
    \]
    is dense $G_\delta$,
    and hence comeager.
    
    In particular,
    if $\Gamma$ is locally of finite asymptotic dimension,
    then
    \[
        \{\mbf a \in \Act(\Gamma) :
        \text{$\mbf a$ is hyperfinite}\}
    \]
    is comeager.
\end{thm}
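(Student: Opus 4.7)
The plan is to obtain the first clause by intersecting the countably many dense $G_\delta$ sets provided by \Cref{subgroup-asdim}, one per finitely generated subgroup of $\Gamma$, and then to deduce the second clause by writing $\Gamma$ as an increasing union of finitely generated subgroups and invoking the classical fact that an increasing union of hyperfinite CBERs is hyperfinite.

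For the first clause, I note that since $\Gamma$ is countable it has only countably many finitely generated subgroups, which I enumerate as $\Delta_0, \Delta_1, \ldots$. For each $i$, \Cref{subgroup-asdim} supplies a dense $G_\delta$ set
\[
    A_i = \{\mbf a \in \Act(\Gamma) : \text{$\mbf a$ is free and $\asdim_c(\mbf a \uhr \Delta_i) = \asdim(\Delta_i)$}\},
\]
and the set in the theorem is exactly $\bigcap_i A_i$, which is again dense $G_\delta$ by the Baire category theorem.

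For the hyperfiniteness clause, I fix $\Gamma$ locally of finite asymptotic dimension and pick any $\mbf a \in \bigcap_i A_i$. Writing $\Gamma = \bigcup_n \Gamma_n$ as an increasing union of finitely generated subgroups, the hypothesis gives $\asdim(\Gamma_n) < \infty$ for each $n$, and the choice of $\mbf a$ then yields $\asdim_c(\mbf a \uhr \Gamma_n) = \asdim(\Gamma_n) < \infty$. Since $\mbf a$ is free, \cite[Theorem 7.1]{CJMST23} then shows that the orbit equivalence relation $E_{\Gamma_n}^{\mbf a}$ of the restricted action is hyperfinite. Finally, $E_\Gamma^{\mbf a} = \bigcup_n E_{\Gamma_n}^{\mbf a}$ as an increasing union of hyperfinite CBERs, and since increasing unions of hyperfinite CBERs are hyperfinite (see, e.g., \cite{Kec24}), I conclude that $\mbf a$ itself is hyperfinite.

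I do not anticipate a serious obstacle: \Cref{subgroup-asdim} does the genericity work for a single subgroup, countability of finitely generated subgroups lets us take a countable intersection without leaving the class of dense $G_\delta$ sets, and local finiteness of asymptotic dimension is precisely what one needs to feed each restricted action into the CJMST theorem. The only point that requires any care is ensuring that the union of orbit equivalence relations is taken along an \emph{increasing} chain (so that the union theorem applies), which is why it is important to write $\Gamma$ as an increasing union of finitely generated subgroups at the outset rather than indexing over all finitely generated subgroups at once.
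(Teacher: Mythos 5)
Your first clause is argued exactly as in the paper: enumerate the countably many finitely generated subgroups, apply \Cref{subgroup-asdim} to each, and intersect the resulting dense $G_\delta$ sets. The problem is in your hyperfiniteness step. The statement you lean on --- that an increasing union of hyperfinite CBERs is hyperfinite --- is not a theorem; it is the well-known \emph{union problem}, open since \cite{DJK94} and still open (it appears as an open problem in \cite{Kec24}). What is known is only that such a union is measure-hyperfinite, which is not enough here: the entire point of this paper is to get genuine (Borel) hyperfiniteness generically, beyond the measure-theoretic statement already available from Suzuki. So deducing from the hyperfiniteness of each $E_{\Gamma_n}^{\mbf a}$ (which does follow, as you say, from finite $\asdim_c$ via \cite[Theorem 7.1]{CJMST23}) that $E_\Gamma^{\mbf a} = \bigcup_n E_{\Gamma_n}^{\mbf a}$ is hyperfinite is a genuine gap, and if this step worked one could dispense with most of the asymptotic-dimension bookkeeping.

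The paper avoids the union problem by using the stronger conclusion that the generic $\mbf a$ satisfies $\asdim_c(\mbf a \uhr \Delta_n) = \asdim(\Delta_n) < \infty$ for \emph{every} term of an increasing exhaustion $(\Delta_n)_n$ of $\Gamma$ by finitely generated subgroups, and then invoking \cite[Theorem 7.3]{CJMST23}, which says precisely that finiteness of the (continuous/Borel) asymptotic dimension of all these restricted actions implies hyperfiniteness of the full orbit equivalence relation $E_\Gamma^{\mbf a}$. In other words, finite asymptotic dimension of the restrictions is uniform quantitative information that survives the passage to the increasing union, whereas hyperfiniteness of the restrictions alone is not known to. Your argument becomes correct if you replace the last step (Theorem 7.1 for each $\Gamma_n$ plus the purported union closure) by a single application of \cite[Theorem 7.3]{CJMST23} to the exhaustion $(\Gamma_n)_n$.
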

\begin{proof}
    For every finitely generated $\Delta \le \Gamma$,
    the set
    \[
        \{\mbf a \in \Act(\Gamma) :
        \text{$\mbf a$ is free and $\asdim_c(\mbf a\uhr \Delta) = \asdim(\Delta)$}\}
    \]
    is dense $G_\delta$ by \Cref{subgroup-asdim}.
    There are countably many finitely generated $\Delta \le \Gamma$,
    so the intersection over all of them is still dense $G_\delta$.

    Now suppose that $\Gamma$ is locally of finite asymptotic dimension.
    It suffices to show that every element $\mbf a$
    of this dense $G_\delta$ set is hyperfinite.
    Fix an increasing sequence $(\Delta_n)_n$ of finitely generated
    subgroups whose union is $\Gamma$.
    Then for every $n$,
    we have $\asdim_c(\mbf a \uhr \Delta_n) = \asdim(\Delta_n) < \infty$.
    Thus $\mbf a$ is hyperfinite by \cite[Theorem 7.3]{CJMST23}.
\end{proof}

\printbibliography

\end{document}